\documentclass[12pt,leqno]{amsart}
\usepackage{amsmath}
\usepackage{amsthm}
\allowdisplaybreaks
\usepackage{amssymb,color}

\def\underset#1#2{{\mathrel{\mathop {{}_{} {#2}}\limits_{{#1}_{}}}}}
\def\upplim_#1{\underset{#1}{\overline\lim}\;}
\def\lowlim_#1{\underset{#1}{\underline\lim}\;}
\newtheorem{lemma}[equation]{Lemma}

\newtheorem{proposition}[equation]{Proposition}

\newtheorem{theorem}[equation]{Theorem}

\newcommand{\C}{{\mathbb{C}}}

\newcommand{\Z}{\mathbb{Z}}

\numberwithin{equation}{section}

\title[Characteristic functions of two meromorphic functions]{Characteristic functions of two meromorphic functions weakly sharing three small\\ functions with bi-weights} 

\author{\sc Si Duc Quang$^{1.2}$}
\author{\sc Phung Nguyen Ngoc Anh$^1$}

\address{$^1$Department of Mathematics, School of Mathematics and Computer Science, Hanoi National University of Education, 136-Xuan Thuy - Cau Giay - Hanoi, Vietnam}
\address{$^2$Institute of Natural Sciences, Hanoi National University of Education, 136-Xuan Thuy - Cau Giay - Hanoi, Vietnam}
\email{quangsd@hnue.edu.vn;anhphungnguyenngoc@gmail.com}

\begin{document}

\begin{abstract} Two meromorphic functions $f$ and $g$ are said to weakly share a small function $a$ with bi-weight $(n,k)$ if the functions $f-a$ and $g-a$ have the same zeros with multiplicities truncated at level $n+1$, while zeros whose multiplicities exceed $k$ are disregarded. In this article, we show that if two meromorphic functions $f$ and $g$ weakly share three small functions $a_i\ (1\le i\le 3)$ with bi-weights $(n_i,k)$ satisfying $n_1n_2n_3>n_1+n_2+n_3+2$ then 
$$(1-\epsilon-\delta_\epsilon)T(r, f)\le (2+\epsilon +\delta_\epsilon )T(r, g)+S(r, g)$$
for every positive number $\epsilon$, where $\delta_\epsilon$ is explicitly estimated depending only on $\epsilon$ and $k$, so that $\delta_\epsilon$ tends to zero as $k$ tends to $+\infty$.
\end{abstract}
\maketitle

\def\thefootnote{\empty}
\footnotetext{
2010 Mathematics Subject Classification:
Primary 30D35; Secondary 32H30, 32A22.\\
\hskip8pt Key words and phrases: meromorphic function, small function, M\"{o}bius transformation.}

\maketitle

\section{Introduction}
Nevanlinna \cite{N} established the theory now bearing his name for meromorphic functions on the complex plane $\C$ in 1926. His theory relates the characteristic function of a meromorphic function to the counting functions of its zeros, poles, and other value distributions. The growth of the characteristic function reflects the behavior of the function near the infinity point. Since then, many authors have developed Nevanlinna theory further and applied it to various problems concerning the growth and value distribution of meromorphic functions. One particularly interesting direction is the study of the relationship between the characteristic functions of meromorphic functions that share certain values or small functions. To state some results in this direction, we first recall the following notation.

Let $f$ be a meromorphic function on $\C$. We denote by $T(r,f)$ the characteristic function of $f$ (see Section 2 for the definition), and by $S(r,f)$ any quantity satisfying
$$\|\ S(r,f)=o(T(r,f)).$$
Here, the notation ``$\|P$'' means that the statement $P$ holds for all $r\in(0,+\infty)$ outside a Borel subset of finite Lebesgue measure. A meromorphic function $a$ is called a small function with respect to $f$ if
$$\|\ T(r,a)=S(r,f).$$

Let $k$ be a positive integer or $+\infty$, and let $S$ be a subset of $\overline{\C}=\C\cup\{\infty\}$. We denote by $E_k^S(a,f)$ the set of all $a$-points of $f$ outside $S$, where an $a$-point of multiplicity $n$ is counted $n$ times if $n\le k$, and counted $k+1$ times if $n>k$. Moreover, if $S=\emptyset$ (resp. $k=+\infty$), we write $E_k(a,f)$ (resp. $E^S(a,f)$) instead of $E_k^S(a,f)$.

Let $f$ and $g$ be two meromorphic functions. We say that $f$ and $g$ weakly weighted share the value $a$ with level $k$ if there exists a discrete subset $S\subset\overline{\C}$ satisfying
$$\|\ N(r,S)=o(T(r,f)+T(r,g)),$$
such that
$$E_k^{S}(a,f)=E_k^{S}(a,g).$$
In the special case where $S=\emptyset$, we simply say that $f$ and $g$ weighted share the value $a$ with level $k$.

When $k=\infty$ (resp. $k=1$), we say that $f$ and $g$ weakly share the value $a$ counting multiplicity (CM) (resp. ignoring multiplicity (IM)).

In 1995, E. Mues \cite{EM} conjectured that if two meromorphic functions $f$ and $g$ share three distinct values counted with multiplicity then
\begin{align*}
    \dfrac{1}{2}T(r,f)\le T(r,g)\le 2T(r,f)+o(1)
\end{align*}
for every $r \in [1,+\infty)$ outside a finite linear measure set, and the numbers $\dfrac{1}{2}, 2$ are sharp.

In 1998, P. Li and C. C. Yang \cite{LY3} established this conjecture. They proved that if two non-constant meromorphic functions $f$ and $g$ share $0,1,\infty$ CM. Then for every positive $\epsilon$, we have the following
\begin{align*}
    T(r,g)\le (2+\epsilon)T(r,f) +S(r,f).
\end{align*}

Later, many related results were obtained for two meromorphic functions that weighted share or weakly weighted share few values (see \cite{AY,QH,LY,Y}). These results differ in their assumptions on the weights and in the applications derived from them. In particular, in 2005, I. Lahiri \cite{L} generalized the previous results by introducing weaker conditions on the weights, and proved the following theorem.

\vskip0.2cm
\noindent
{\bf Theorem A.} (see \cite{L}) \textit{Let $f$ and $g$ be two non-constant meromorphic functions that weighted share $0,1,\infty$ with levels $1,m,k$ respectively, so that $mk-m-k-3>0$. Then
\begin{align*}
     T(r, g) \le 2T(r,f)+S(r,f) \text{ and } T(r,g) \le 2T(r,f)+S(r,g).
\end{align*}}
\indent
Recently, Quang and Ky considered a further weakening of the sharing condition. They introduced the notion of two meromorphic functions weakly sub-weighted sharing a value, defined as follows.

Let $f$ be a meromorphic function, $a$ a value in $\C$ and $k$ a non-negative integer or infinity. Denote by $E^S_{k)}(a, f)$ the set of all $a$-points of $f$ outside a discrete subset $S$ of $\C$, where an $a$-point of multiplicity $m$ is counted $m$ times if $m \le k$ and is omitted if $m >k$ (i.e., not counted). Two meromorphic functions $f$ and $g$ are said to weakly sub-weighted share the value $a$ with level $k$ if there exists a discrete subset $S$ of $\overline\C$ of the counting function equal to $o(T(r,f)+T(r,g))$ such that $E_{k)}^{S}(a,f)=E_{k)}^{S}(a,g).$

Quang and Ky proved the following theorem.

\vskip0.2cm
\noindent
{\bf Theorem B.} (see \cite{QK}) \textit{Let $f, g$ be two non-constant meromorphic functions. Assume that $f$ and $g$ weakly sub-weighted share three distinct values $a_1,a_2,a_3\in\C$ with levels $k_1,k_2,k_3$ respectively, where $k_1, k_2, k_3$ are positive integer or maybe infinity. Then
$$(1-\epsilon-\delta_\epsilon)T(r, f)\le (2+\epsilon +\delta_\epsilon )T(r, g)+S(r, g)$$
for every positive number $\epsilon$, where $\delta_\epsilon=(2^{4/\epsilon^2}+16/\epsilon^4)\left(\dfrac{1}{k_1+1}+\dfrac{1}{k_2+1}+\dfrac{1}{k_3+1}\right)$.}

\vskip0.2cm
Our purpose of this paper is to extend Theorems A and B by considering the case where two functions weakly share three distinct small functions with bi-weights. Now, we recall the following.

Let $f$ be a non-constant meromorphic function and $a$ be a meromorphic function. For $S$ to be a discrete subset of $\overline\C$ and $n,k$ two positive integers. Denote by $E^{S}_{n,k)}(a,f)$ the set of all zeros of $f-a$ outside $S$ with multiplicity not exceeding $k$, where each zero of multiplicity $m\le k$ is counted $m$ times if $m\le n$ and $n+1$ times if $m>n$ ,and all zeros of multiplicity $m>k$ are disregarded. Note that 
$$E^S_{n,+\infty)}(a,f)=E^S_{n}(a,f).$$ 
We say that $f$ and $g$ weakly share the small functions $a$ with the bi-weight $(n,k)$ if there exists a discrete subset $S$ of $\overline\C$ of the counting function equal to $o(T(r,f)+T(r,g))$ such that $E^S_{n,k)}(a,f)=E^S_{n,k)}(a,g)$. 

In this paper, we will prove the following theorem.

\begin{theorem}\label{1.1}
    \textit{Let $f$ and $g$ be two non-constant meromorphic functions and $a_1,a_2,a_3$ three small functions with respect to $f$ and $g$. Let $n_1,n_2,n_3,k$ be four positive integers ($k$ may be $+\infty$) and $\Delta :=n_1n_2n_3-n_1-n_2-n_3-2>0$. If $f$ and $g$ weakly share $a_i\ (1\le i\le 3)$ with bi-weights $(n_i,k)$ then
$$(1-\epsilon-\delta_\epsilon)T(r, f)\le (2+\epsilon +\delta_\epsilon )T(r, g)+S(r, g)$$
for every $\epsilon>0$, where $\displaystyle\delta_\epsilon=(2^{4/\epsilon^2}+16/\epsilon^4)\left(\frac{2\sum_{1\le i<j\le 3}(n_i+1)(n_j+1)}{\Delta(k+1)}+\frac{3}{k+1}\right)$.}
\end{theorem}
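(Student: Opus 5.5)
We reduce to the case of values $0,1,\infty$ and then follow the scheme of Li--Yang as refined by Quang--Ky (Theorem B). The only new input is a counting estimate for the shared points coming from the bi-weight condition.

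\emph{Step 1: normalisation.} Put
$$F=\frac{(f-a_1)(a_2-a_3)}{(f-a_3)(a_2-a_1)},\qquad G=\frac{(g-a_1)(a_2-a_3)}{(g-a_3)(a_2-a_1)}.$$
Then $T(r,F)=T(r,f)+S(r,f)$, and likewise for $G$. Outside a set $S'$ whose counting function is $o(T(r,f)+T(r,g))$ (zeros and poles of the $a_i$ and of $a_i-a_j$), $F$ and $G$ weakly share $0,1,\infty$ with bi-weights $(n_1,k),(n_2,k),(n_3,k)$. So we may assume $a_1=0$, $a_2=1$, $a_3=\infty$.

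\emph{Step 2: the points not covered by sharing.} The bi-weight hypothesis controls only $a_i$-points of multiplicity at most $k$. The $a_i$-points of $f$ or $g$ of multiplicity $>k$ have truncated counting function at most $\frac{1}{k+1}N(r,a_i,f)\le \frac1{k+1}T(r,f)+O(1)$, and similarly for $g$. This is the source of the term $\frac{3}{k+1}$ in $\delta_\epsilon$.

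At a common $a_i$-point of multiplicity $\le k$, the multiplicities $p$ of $f$ and $q$ of $g$ satisfy one of two conditions: either $p=q\le n_i$, or both exceed $n_i$. Write $\overline N_i^{*}(r)$ for the counting function of the common points with $p\neq q$, so that $p,q\ge n_i+1$ there.

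We then use the auxiliary function
$$H=\frac{F''}{F'}-\frac{2F'}{F-1}-\frac{G''}{G'}+\frac{2G'}{G-1}$$
and its companions obtained by permuting $0,1,\infty$. When $H\not\equiv0$, the standard argument bounds $N(r,H)$ by the sum of the $\overline N_i^{*}$ plus the non-shared points. Applying the second main theorem to $F$ and $G$ then gives, roughly,
$$\sum_i \overline N_i^{*}(r)\le \frac{2\sum_{i<j}(n_i+1)(n_j+1)}{\Delta}\cdot\frac{T(r,f)+T(r,g)}{k+1}+S.$$
The weights $(n_i+1)$ arise from the bound $\overline N^{*}_i\le \frac{1}{n_i+1}N(r,a_i)$. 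Solving the resulting linear system among the three inequalities $\overline N(r,a_i)\le\sum_{j\ne i}\ldots$ produces the determinant $n_1n_2n_3-n_1-n_2-n_3-2=\Delta$, and this is exactly where $\Delta>0$ is needed. The case $H\equiv0$ gives a Möbius relation between $F$ and $G$, in which case $T(r,f)=T(r,g)+S$ holds trivially.

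\emph{Step 3: the Li--Yang/Quang--Ky argument.} After Steps 1 and 2, $f$ and $g$ share $0,1,\infty$ CM up to an exceptional set. The "exceptional" counting function is bounded by
$$\eta\,(T(r,f)+T(r,g)),\qquad \eta=\frac{2\sum_{i<j}(n_i+1)(n_j+1)}{\Delta(k+1)}+\frac{3}{k+1}.$$
We then follow Quang--Ky's proof of Theorem B verbatim. Consider $\alpha=\frac{f}{g}$ and $\beta=\frac{f-1}{g-1}$, which have only exceptional zeros and poles, and use the Borel-type/Li--Yang dichotomy: either some $\alpha^{s}\beta^{t}$ with $|s|,|t|\le 2/\epsilon$ is nearly constant, or one gets the bound with $2+\epsilon$. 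Every occurrence of the exceptional counting function is multiplied by the combinatorial factor $2^{4/\epsilon^2}+16/\epsilon^4$ coming from the number of pairs $(s,t)$ and their sizes. This yields $\delta_\epsilon=(2^{4/\epsilon^2}+16/\epsilon^4)\eta$.

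\emph{Main obstacle.} The main obstacle is Step 2: obtaining the precise coefficient $\frac{2\sum(n_i+1)(n_j+1)}{\Delta}$ for the non-CM shared points. I would first write the three second-main-theorem inequalities, weighting each $\overline N^{*}_i$ by $\frac{1}{n_i+1}$, and then eliminate variables explicitly by Cramer's rule.
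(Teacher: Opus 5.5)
Your Steps 1 and 3 agree in outline with the paper. The paper normalises to $0,1,\infty$. It then runs the Quang--Ky form of the Li--Yang argument with $h_1=f/g$ and $h_2=(f-1)/(g-1)$, splits according to whether $h_1^sh_2^t\equiv 1$, and bounds the common $1$-points of $h_1,h_2$ by the lemma with constant $2^{(n+1)^2}+\cdots$. Your $\eta$ is also the correct size of the exceptional set.

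The gap is Step 2, the key estimate, and the mechanism you sketch there fails. For $H=\frac{F''}{F'}-\frac{2F'}{F-1}-\frac{G''}{G'}+\frac{2G'}{G-1}$, the poles are \emph{not} confined to discrepancy points, for two reasons:
\begin{itemize}
\item $H$ has poles at zeros of $F'$ or $G'$ where $F,G\notin\{0,1,\infty\}$, and the sharing hypothesis says nothing about these.
\item At a point where $F,G$ have zeros of multiplicities $p\neq q$, both $>n_1$, $H$ has a simple pole of residue $p-q$ rather than a zero of high order.
\end{itemize}
So this $H$ cannot bound $\overline N^{*}_u$ in terms of the other $\overline N^{*}_v$ and the multiplicity-$>k$ points. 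Likewise, $\overline N^*_i\le \frac{1}{n_i+1}N(r,a_i)$ only gives $\frac{1}{n_i+1}T(r,f)$, which does not decay in $k$. The factor $\frac1{k+1}$ can only come from the points of multiplicity $>k$, and the factors $(n_v+1)(n_w+1)$ do not come from this bound either.

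The missing idea is the paper's Lemma 2.6. For each $u$, normalise so that $a_u=0$ and $\{a_v,a_w\}=\{1,\infty\}$, and take the logarithmic derivative of $(f-1)/(g-1)$, namely $H=\frac{f'}{f-1}-\frac{g'}{g-1}$. Then $H\not\equiv0$ unless $f$ is a M\"{o}bius transformation of $g$. Put $x_i=\overline N_{(n_i+1,k)}(r,\nu^{a_i}_f)$.
\begin{itemize}
\item Zeros: if $z\notin S$ is a zero of $f$ of multiplicity in $(n_u,k]$, sharing forces $g$ to have a zero of multiplicity in $(n_u,k]$ at $z$. Hence $H$ vanishes to order $\ge n_u$ there.
\item Poles: these are simple and occur only where the multiplicities at $1$ or $\infty$ differ. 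That means points counted in $x_v$ or $x_w$, or $a_v$-, $a_w$-points of $f$ or $g$ of multiplicity $>k$, whose reduced counting function is $\le\frac{2}{k+1}T(r)+S(r)$.
\end{itemize}
Since $m(r,H)=S(r)$, the first main theorem gives
$n_ux_u\le N(r,\nu^0_H)\le N(r,\nu^\infty_H)+S(r)\le x_v+x_w+\frac{2}{k+1}T(r)+S(r)$.
No second main theorem is needed here.

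The matrix of this $3\times 3$ system has $n_i$ on the diagonal and $-1$ elsewhere, and its determinant is $\Delta$. The row of its adjugate belonging to $u$ has nonnegative entries $n_vn_w-1$, $n_w+1$, $n_v+1$ (in positions $u,v,w$), summing to $(n_v+1)(n_w+1)$. Multiplying through gives $\Delta x_u\le\frac{2(n_v+1)(n_w+1)}{k+1}T(r)+S(r)$. Summing over $u$ and adding $\frac{3}{k+1}T(r)$ for the multiplicity-$>k$ points yields exactly your $\eta$, after which your Step 3 goes through.
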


Remark.(1) Let $n_1=1$ and let $k=+\infty$ then Theorem \ref{1.1} recovers Theorem A.

(2) Let $n_1=n_2=n_3=n$ and let $n\longrightarrow\infty$ then we get the Theorem B for the case $k_1=k_2=k_3=k$.

\section{Some lemmas and auxiliary results from Nevanlinna theory}

For a divisor $\nu$ on $\C$, we consider $\nu$ as a function on $\C$ with values in $\Z$ so that the set $\overline{\{z|\nu(z)\ne 0\}}$ is a discrete set. We define the counting function of $\nu$ by
$$N(r,\nu)=\int\limits_1^r \dfrac {n(t)}{t}dt \quad (1<r<\infty), \text{ where } n(t)=\sum\limits_{|z|\leq t}\nu (z).$$
For two positive integers $k,M$ (maybe $M= \infty$), we set 
$$ \nu^{[M]}_{\leq k} (z)=
\begin{cases}
\min\{M,\nu (z)\}&\text{ if }\nu (z)\leq k\\
0&\text{ for otherwise. }
\end{cases}$$
and write $N^{[M]}_{k)}(r,\nu)$ for $N(r,\nu^{[M]}_{\leq k})$. We neglect character $^{[M]}$ (resp. $\leq k$) if $M=+\infty$ (resp. $k=+\infty$). In the same way, we define $\nu^{[M]}_{\ge k}$ and write $N^{[M]}_{(k}(r,\nu)$ and $N^{[M]}_{(\ell,k)}(r,\nu)$ for $N(r,\nu^{[M]}_{\ge k})$ and $N(r,(\nu_{\ge\ell})_{\le k}^{[M]})$, respectively. We also write $\overline N(r,\nu)$ for $N^{[1]}(r,\nu)$.

Let $f$ be a non-zero holomorphic function. For each $z_0\in\C$, expanding $f$ as
$ f(z) =\sum_{i=0}^\infty b_i(z-z_0)^i$ around $z_0$, then we define $\nu^{0}_{f}(z_0):=\min\{i\ :\ b_i \ne 0\}$. 

Let $\varphi$ be a non-constant meromorphic function. Then there are two holomorphic function $\varphi_1,\varphi_2$ without common zeros such that $\varphi =\dfrac{\varphi_1}{\varphi_2}$. We define 
$ \nu_\varphi^0:=\nu_{\varphi_1}^0$ and $\nu_\varphi^\infty:=\nu_{\varphi_2}^0$ and $\nu_\varphi=\nu^0_\varphi -\nu^\infty_\varphi$. The proximity function of $\varphi$ is defined by:
$$ m(r,\varphi ):=\frac{1}{2\pi}\int\limits_{0}^{2\pi}\log^+|\varphi (re^{i\theta})|d\theta\ \ (r>1), $$
here $\log^+x=\max\{1,\log x\}$ for $x\in (0,\infty )$. The Nevanlinna characteristic function of $\varphi$ is defined by
$$ T(r,\varphi ):=m(r,\varphi ) +N(r,\nu^\infty_{\varphi}).$$

\begin{theorem}[\cite{Y}, Corollary 1]\label{2.1}
Let $f$ be a non-constant meromorphic function on $\C$. Let $a_1,\dots ,a_q\ (q\ge 3)$ be $q$ distinct small meromorphic functions (with respect to $f$) on $\C$. Then, for each $\epsilon >0$, the
following holds
$$\|\ (q-2-\epsilon)T(r,f)\le \sum_{i=1}^q\overline N(r,\nu^0_{f-a_i})+o(T(r,f)).$$
\end{theorem}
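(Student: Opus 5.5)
The statement is Yamanoi's second main theorem for moving targets with truncation level one. I would not attempt a self-contained proof: the paper quotes it from \cite{Y}, and the plan below follows Yamanoi's strategy in outline, with the genuinely deep parts identified rather than carried out.

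\textbf{Preliminary reductions.} Since the $a_i$ are small, a M\"obius transformation with small-function coefficients, sending three of the $a_i$ to $0,1,\infty$, changes $T(r,f)$ only by $S(r,f)$. It also preserves the reduced counting functions $\overline N(r,\nu^0_{f-a_i})$ up to $o(T(r,f))$, because coincidences among the $a_i$ and the poles of the coefficients contribute only $N(r,\cdot)$ of small functions. Next, I would enlarge the field: let $\mathcal K$ be the field generated over $\C$ by $a_1,\dots,a_q$ and their derivatives up to a fixed order. Every element of $\mathcal K$ is small with respect to $f$.

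\textbf{The classical route and why it falls short.} The standard Osgood--Steinmetz argument forms the Wronskian of the monomials $f^j a^{\alpha}$ of a fixed degree $s$ over $\mathcal K$. Combining the lemma on the logarithmic derivative with a counting of the $\mathcal K$-linear relations yields
\[
\Bigl(q-2-\epsilon\Bigr)T(r,f)\le \sum_{i=1}^q N(r,\nu^0_{f-a_i})-N_{\mathrm{ram}}(r)+o(T(r,f)),
\]
where $N_{\mathrm{ram}}$ counts the vanishing of the Wronskian. The ramification term absorbs multiple zeros only up to a truncation level that grows with $s$. So this route gives the inequality with $N^{[M]}$ for some large $M$ depending on $\epsilon$ and $q$, not with $\overline N=N^{[1]}$.

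\textbf{Improving the truncation to level one.} Following Yamanoi, I would treat the problem as an Ahlfors-type covering problem. Consider the ``moving'' Riemann sphere $\overline{\C}\times\C\to\C$ and the $q$ sections given by the graphs of the $a_i$. One first proves a Ahlfors--Shimizu-type estimate for the spherical area swept by $f$ over the complement of these sections; this is where the $-2$ (the Euler characteristic of the sphere) appears. Then the defect at multiply covered points is controlled by comparing $f$ with a family of algebroid (finite-sheeted) approximations over $\mathcal K$. The key input is a degeneration/compactness argument for the moduli of $q$-pointed genus-zero curves: pointed stable curves $\overline{M}_{0,q}$ handle collisions of the $a_i$, and the exceptional set of $r$ is controlled by Borel's growth lemma.

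\textbf{Main obstacle.} The hard step is precisely the passage from the truncated level $M$ to truncation one. This requires showing that the ramification of $f$ relative to the moving targets, integrated against the characteristic, is at most $\epsilon T(r,f)$ outside a set of finite measure, uniformly as the targets degenerate. I would try Yamanoi's approach first: combining the Ahlfors island estimates with the stable-curve compactification and a careful estimate of the ``hyperbolic area'' lost at collisions. Everything else is routine Nevanlinna bookkeeping.
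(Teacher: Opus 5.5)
Your proposal agrees with the paper: the paper gives no proof of this statement and simply quotes it as Corollary 1 of Yamanoi \cite{Y}, so deferring to that theorem is exactly the paper's route. Treat your sketch only as orientation, though. In Ahlfors' covering-surface theory the truncation at level one comes for free (each island is counted once, whatever its multiplicity), so the real difficulty is controlling the error caused by the targets $a_i$ moving and colliding. It is not showing that the ramification over the targets is at most $\epsilon T(r,f)$, which is false in general: for $f=\cos z$ every $(\pm1)$-point is double, so $\sum_{a=\pm1}\bigl(N(r,\nu^0_{f-a})-\overline N(r,\nu^0_{f-a})\bigr)\sim T(r,f)$.
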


\begin{proposition}[\cite{QA}, Proposition 3.8]\label{2.2}
Let $f$ and $g$ be non-constant meromorphic functions and $a_i,b_i\ (i=1,2,3)$ $ (a_i\ne a_j, b_i\ne b_j, i\ne j)$ be small functions (with respect to $f$ and $g$). Assume that $f$ is not a quasi-M\"{o}bius transformation of $g$. Then for every positive integer $n$ we have the following inequality
$$\|\  N(r,\nu)\le \overline N(r,|\nu^0_{f-a_1}-\nu^0_{g-b_1}|)+\overline N(r,|\nu^0_{f-a_2}-\nu^0_{g-b_2}|)+S(r), $$
where $S(r)=o(T(r,f)+T(r,g))$ outside a finite Borel measure set of $[1,+\infty)$ and $\nu$ is the divisor defined by 
$\nu (z)=\max\{0,\min\{\nu^0_{f-a_3}(z),\nu^0_{g-b_3}(z)\}-1\}.$
\end{proposition}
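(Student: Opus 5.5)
The plan is to normalize both functions by cross-ratios that send the third pair of small functions to $\infty$, and then count the zeros of a single auxiliary function built from logarithmic derivatives.

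\textbf{Normalization.} Put
$$F=\frac{(f-a_1)(a_2-a_3)}{(f-a_3)(a_2-a_1)},\qquad G=\frac{(g-b_1)(b_2-b_3)}{(g-b_3)(b_2-b_1)}.$$
Outside the zeros and poles of the small functions $a_i,\ a_i-a_j,\ b_i,\ b_i-b_j$, the correspondence is as follows:
\begin{itemize}
\item $F=0$ exactly at zeros of $f-a_1$, with the same multiplicity;
\item $F=1$ exactly at zeros of $f-a_2$, with the same multiplicity;
\item $F=\infty$ exactly at zeros of $f-a_3$, with the same multiplicity.
\end{itemize}
Poles of $f$ give the regular value $F=(a_2-a_3)/(a_2-a_1)$. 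The same holds for $G$ with respect to $g$ and the $b_i$. The exceptional set has counting function $S(r)$, and $T(r,F)=T(r,f)+S(r)$, $T(r,G)=T(r,g)+S(r)$.

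\textbf{Auxiliary function.} Consider
$$H=\frac{F'}{F(F-1)}-\frac{G'}{G(G-1)}=\Big(\frac{F'}{F-1}-\frac{F'}{F}\Big)-\Big(\frac{G'}{G-1}-\frac{G'}{G}\Big).$$

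First, $H\not\equiv0$. If $H\equiv0$, then integrating gives $\frac{F-1}{F}=c\,\frac{G-1}{G}$ for a constant $c\ne0$. This expresses $f$ as a M\"obius transformation of $g$ whose coefficients are built from the small functions $a_i,b_i$, i.e.\ a quasi-M\"obius transformation, which is excluded by hypothesis.

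Second, by the lemma on the logarithmic derivative (applied to $F$, $F-1$, $G$, $G-1$), $m(r,H)=S(r)$.

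\textbf{Local analysis away from the exceptional set.} Let $p=\nu^0_{f-a_1}(z_0)$ or $\nu^0_{f-a_2}(z_0)$, as appropriate, and let $q$ be the corresponding multiplicity for $g$.
\begin{itemize}
\item Near a zero of $F$ of order $p$, the term $F'/(F(F-1))$ has a simple pole with residue $-p$.
\item Near a $1$-point of $F$ of order $p$, it has a simple pole with residue $p$.
\item Elsewhere it is holomorphic, including at poles of $F$.
\end{itemize}
The same holds for $G$ with multiplicity $q$. Hence at common zeros of $f-a_1$ and $g-b_1$ (or of $f-a_2$ and $g-b_2$) with equal multiplicities, the residues cancel and $H$ is holomorphic. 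Therefore the poles of $H$ are simple and lie only on $\operatorname{supp}|\nu^0_{f-a_1}-\nu^0_{g-b_1}|\cup\operatorname{supp}|\nu^0_{f-a_2}-\nu^0_{g-b_2}|$, up to the exceptional set. This gives
$$N(r,\nu^\infty_H)\le \overline N(r,|\nu^0_{f-a_1}-\nu^0_{g-b_1}|)+\overline N(r,|\nu^0_{f-a_2}-\nu^0_{g-b_2}|)+S(r).$$

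Now let $z_0$ be a zero of $f-a_3$ of order $p$ and of $g-b_3$ of order $q$. Then $F$ has a pole of order $p$ there, so
$$\frac{F'}{F(F-1)}=\frac{F'}{F^2}\cdot\frac{F}{F-1}$$
vanishes to order $p-1$. Similarly the $G$-term vanishes to order $q-1$. Hence $\nu^0_H(z_0)\ge\min\{p,q\}-1=\nu(z_0)$.

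\textbf{Conclusion.} By the first main theorem,
$$N(r,\nu)\le N(r,\nu^0_H)+S(r)\le T(r,H)+O(1)=N(r,\nu^\infty_H)+m(r,H)+O(1).$$
Substituting the two estimates above yields the stated inequality.

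\textbf{Main obstacle.} The delicate point is bookkeeping the exceptional set. Zeros and poles of the coefficients (and of $a_i-a_j$, $b_i-b_j$) can create extra poles of $H$ or destroy the local expansions. I would handle this by noting that all such points are counted by $\sum N(r,\cdot)$ of small functions, which is $S(r)$. The other point to check is making precise that $H\equiv0$ means quasi-M\"obius in the sense used by the paper.
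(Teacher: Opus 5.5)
Your proposal is correct and follows essentially the same route as the cited result and as the paper's own argument in Lemma~2.6: there $H=\frac{f'}{f-1}-\frac{g'}{g-1}$ is used after normalizing to $0,\infty,1$, and your $\frac{F'}{F(F-1)}$, being the logarithmic derivative of $(F-1)/F$, plays exactly that role (simple poles only where the multiplicities for the first two pairs differ, zeros of order at least $\min\{p,q\}-1$ at common $a_3$/$b_3$-points, and $m(r,H)=S(r)$). The one detail to write out is that at an exceptional point $z_0$ the deficit $\nu(z_0)-\nu^0_H(z_0)$, which can be large, is bounded by the orders of the zeros and poles of $a_i-a_j$, $b_i-b_j$, $a_i$, $b_i$ at $z_0$; the reduced count of such points alone would not suffice, but these orders are absorbed by the non-reduced counting functions of small functions, which are $S(r)$.
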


\begin{lemma}[{Cf. \cite[Lemma 3.2]{QK}}]\label{2.5}
Let $f_1$ and $f_2$ be two non-constant meromorphic functions. If $(f_1^sf_2^t-1)$ is not identically zero for all integers $s$ and $t$ $(|s|+|t|>0)$ then for any positive integer $n$, one of the following assertion holds:\\
(i) $\overline N(r,1;f_1,f_2)\leq (2^{n(n+2)}-n(n+2)-1)\sum_{i=1,2}\left(\overline N(r,\nu^0_{f_i})+\overline N(r,\nu^\infty_{f_i})\right)+\dfrac{1}{n+2}T(r)+S(r);$\\
(ii) $\overline N(r,1;f_1,f_2)\leq (2^{(n+1)^2}+n^4+4n^3+n^2-6n-2)\sum_{i=1,2}\left(\overline N(r,\nu^0_{f_i})+\overline N(r,\nu^\infty_{f_i})\right)+S(r),$\\
where $\overline N(r,1;f_1,f_2)$ denotes the reduced counting function of $f_1$ and $f_2$ related to the common $1$-points, $T(r)=T(r,f_1)+T(r,f_1)$ and $S(r)=o(T(r))$.
\end{lemma}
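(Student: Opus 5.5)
The statement immediately above is Lemma 2.5, so the plan is to prove it by the Corvaja–Zannier style argument for common $1$-points, combining a second main theorem for the vector of monomials in $f_1,f_2$ with a dichotomy on linear degeneracy.

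\emph{Setup.} Fix $n$. Consider the vector space $V_n$ of polynomials $P\in\C[X,Y]$ of total degree at most $n$. Its monomials are $f_1^if_2^j$ with $i+j\le n$, and the dimension of $V_n$ is of order $n^2$; the bookkeeping $n(n+2)$ in the constants should come from these counts.

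Let $W\subset V_n$ be the subspace of polynomials vanishing at $(1,1)$ to order at least $n$. Then every common $1$-point $z_0$ of $f_1,f_2$ is a zero of $P(f_1,f_2)$ of multiplicity at least $n$ for each $P\in W$. The key step is to choose a basis $P_1,\dots,P_q$ of $W$. Then apply the truncated second main theorem to the holomorphic curve $F=(P_1(f_1,f_2):\cdots:P_q(f_1,f_2))$, clearing denominators by the poles of $f_1,f_2$.

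\emph{Degenerate case.} Suppose the functions $P_\ell(f_1,f_2)$ (equivalently, the monomials $f_1^if_2^j$) are linearly dependent over $\C$. We then have a nontrivial relation $\sum c_{ij}f_1^if_2^j\equiv0$. By Borel's lemma in its Nevanlinna form, some subsum of at least two terms has ratios that are constant. The hypothesis that $f_1^sf_2^t-1\not\equiv 0$ rules out ratios equal to $1$, and more generally forces a nontrivial curve relation. On that curve, we bound the common $1$-points by an elementary counting argument. The idea is to reduce to a one-variable function $h=f_1^{s}f_2^{t}$ with $h-c\equiv 0$, and then count the zeros of $f_1-1$ through the derivative / Wronskian of the relation. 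This gives a bound purely in terms of $\overline N$ of the zeros and poles of $f_1,f_2$, with a polynomial-plus-exponential constant, which is alternative (ii).

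\emph{Nondegenerate case.} If the $P_\ell(f_1,f_2)$ are linearly independent, we use Cartan's second main theorem with the Wronskian. Each common $1$-point contributes multiplicity at least $n$ to the zeros of every coordinate, hence roughly $n-q$ to the counting function. The Wronskian and the poles contribute at most a constant (bounded by $2^{\dim}$ from choosing subsets of coordinates) times $\overline N$ of the zeros and poles of $f_i$. The characteristic of $F$ is at most $nT(r)$. Dividing by $n(n+2)$-type normalizations leaves the residual $\frac1{n+2}T(r)$, which is (i).

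\emph{Main obstacle.} The hardest step is the precise choice of $W$ and the multiplicity count, balancing $\dim W$ against the vanishing order at $(1,1)$ so that the loss is exactly $\frac{1}{n+2}T(r)$. I would first try degree $n+1$ polynomials with vanishing order $n$. Then I would check that the exponent $2^{n(n+2)}$ matches the number of subsets arising when Wronskian minors are estimated via the logarithmic derivative lemma.
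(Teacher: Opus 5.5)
The paper gives no proof of this lemma; it is quoted from \cite{QK}. So your outline has to stand on its own, and its central step fails.

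\textbf{The nondegenerate case.} The polynomials of degree $\le n$ vanishing to order $\ge n$ at $(1,1)$ are exactly the span of $(X-1)^a(Y-1)^{n-a}$, $0\le a\le n$. Thus $q=\dim W=n+1$, so your ``$n-q$'' equals $-1$. Moreover $F=(1:\phi:\cdots:\phi^n)$ with $\phi=(f_1-1)/(f_2-1)$.

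Common $1$-points are common zeros of all coordinates, so they are divided out of any reduced representation. Cartan's theorem for $F$ then reduces to Nevanlinna theory of $\phi$, for which these points are not special. The same happens for any linear system with a base point at $(1,1)$, including your degree-$(n+1)$ variant. Base points only lower $T(r,F)$, and nothing in your plan supplies a lower bound for $T(r,F)$ that would let you exploit this.

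What is missing is an auxiliary function that has high-order zeros at the common $1$-points but controlled characteristic. The control must come from monomials, whose logarithmic derivatives are small. For example, let $u_{ij}=f_1^if_2^j-1$ for $(i,j)\in\{0,\dots,n\}^2\setminus\{(0,0)\}$; there are $q=n(n+2)$ of them. If they are linearly independent, their Wronskian $\Psi\not\equiv0$ vanishes to order $\ge q$ at every common $1$-point $z_0$. This follows from $W(\varphi v_1,\dots,\varphi v_q)=\varphi^qW(v_1,\dots,v_q)$ with $\varphi=z-z_0$.

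Up to sign, $\Psi$ is the determinant whose first row is all ones and whose other rows are the derivatives of order $0,\dots,q-1$ of all $f_1^if_2^j$, $0\le i,j\le n$. Expanding along the row of ones gives
\[
\Phi:=\Psi\Big/\prod_{0\le i,j\le n}f_1^if_2^j=\sum_{0\le i,j\le n}D_{ij}\,f_1^{-i}f_2^{-j},
\]
where the $D_{ij}$ are polynomials in $f_1'/f_1$, $f_2'/f_2$ and their derivatives. The $D_{ij}$ satisfy $m(r,D_{ij})=S(r)$, and their poles lie only over zeros and poles of $f_1,f_2$, with order $\le\binom q2$. Hence
\[
T(r,\Phi)\le nT(r)+\binom q2\sum_{i=1,2}\bigl(\overline N(r,\nu^0_{f_i})+\overline N(r,\nu^\infty_{f_i})\bigr)+S(r).
\]
On the other hand $q\,\overline N(r,1;f_1,f_2)\le N(r,\nu^0_\Phi)\le T(r,\Phi)+O(1)$. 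Since $n/q=1/(n+2)$, this gives (i), even with the constant $(q-1)/2$, which is smaller than the stated one. This is also where the quantities $n(n+2)$ and $\frac1{n+2}$ come from; your sketch never produces them.

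\textbf{The degenerate case.} This also needs repair. Borel's lemma in the form ``some ratio is constant'' requires the terms to have $S(r)$ zeros and poles, which is not assumed here. For instance, $f_1=2f_2-1$ gives a linear relation among $1,f_1,f_2$ with no constant $f_1^sf_2^t$. That is exactly why the $\overline N$-terms appear in the statement.

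If the $u_{ij}$ are dependent, take a minimal vanishing linear relation among the $f_1^if_2^j$, $0\le i,j\le n$.
\begin{itemize}
\item If it has two terms, some $f_1^sf_2^t$ with $(s,t)\ne(0,0)$ is a constant $c$, and $c\ne1$ by hypothesis. Then there are no common $1$-points at all, and no derivative or Wronskian counting is needed.
\item If it has $p+1\ge3$ terms, apply Cartan's second main theorem to the curve formed by $p$ of them (nondegenerate by minimality), with the $p+1$ hyperplanes given by the coordinates and the relation. For a ratio $f_1^sf_2^t$ of two of the terms this gives $T(r,f_1^sf_2^t)\le(p^2-1)\sum_{i=1,2}(\overline N(r,\nu^0_{f_i})+\overline N(r,\nu^\infty_{f_i}))+S(r)$, because every coordinate of a reduced representation has zeros only over zeros and poles of $f_1,f_2$. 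Then $\overline N(r,1;f_1,f_2)\le N(r,\nu^0_{f_1^sf_2^t-1})\le T(r,f_1^sf_2^t)+O(1)$, which is (ii).
\end{itemize}

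Finally, with your $W$, linear dependence of the $P_\ell(f_1,f_2)$ only says that $\phi$ is constant. It is not equivalent to dependence of the monomials, as you claimed.
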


\begin{lemma}\label{2.6}
Let $f$ and $g$ be non-constant meromorphic functions and $a_i\ (i=1,2,3)$ $ (a_i\ne a_j, i\ne j)$ small functions (with respect to $f$ and $g$). Let $n_1,n_2,n_3,k$ be positive integers ($k$ may be $+\infty$) and $\Delta=n_1n_2n_3-n_1-n_2-n_3-2>0$. Suppose that $f$ and $g$ weakly share three small functions $a_i\ (1\le i\le 3)$ with the bi-weight $(n_i,k)$. If $f$ is not a quasi-M\"{o}bius transformation of $g$ then
$$\|\ \Delta\overline N_{(n_u+1,k)}(r,\nu^{a_u}_f)\le \frac{2(n_v+1)(n_w+1)}{k+1}T(r)+S(r),$$
for every permutation $(u,v,w)$ of $\{1,2,3\}$, where $T(r)=T(r,f)+T(r,g)$, $S(r)=o(T(r))$.
\begin{proof}
Denote by $S$ the discrete subset of $\overline\C$ with $\|\ N(r,S)=o(T(r,f)+T(r,g))$ such that $E^S_{n_i,k)}(a_i,f)=E^S_{n_i,k)}(a_i,g)\ (i=1,2,3).$ By using a quasi-M\"{o}bius transformation if necessary, we may assume that $a_1=0,a_2=\infty,a_3=1$. Without loss of generality, we prove the lemma in the case of $u=1$. We set
\begin{align*}
H=\dfrac{f'}{f-1}-\dfrac{g'}{g-1}
\end{align*}
Since $f$ is not a M\"{o}bius transformation of $g$, $H\not\equiv 0$. Let $z\ (z\not\in S)$ be a zero of $f$ with $\nu^0_f(z)\le k$. If $\nu^0_f(z)> n_1$ then $z$ must be a zero of $H$ with multiplicity at least $n_1$. Therefore, we have
\begin{align}\label{3.6}
\begin{split}
n_1\overline N_{(n_1+1,k)}(r,\nu^{a_1}_f)&\le N(r,\nu^0_{H})\le T(r,H)=N(r,\nu^{\infty}_{H})+S(r)\\
&\leq \overline N(r,|\nu^{\infty}_f-\nu^{\infty}_g |)+\overline N(r,|\nu^{1}_f-\nu^{1}_g |)+S(r)\\
&\leq \sum_{i=2,3}(\overline N_{(n_i+1,k)}(r,\nu^{a_i}_{f})+\sum_{h=f,g}\overline N_{(k+1}(r,\nu^{a_i}_{h}))+S(r)\\
&\leq \sum_{i=2,3}\overline N_{(n_i+1,k)}(r,\nu^{a_i}_{f})+\dfrac{2}{k+1}T(r)+S(r).
\end{split}
\end{align}
Here, the third inequality comes from Proposition \ref{2.2}. Because $a_1,a_2,a_3$ play the same role, we have
$$n_2\overline N_{(n_2+1,k)}(r,\nu^{a_2}_f)\leq\sum_{i=1,3} \overline N_{(n_i+1,k)}(r,\nu^{a_i}_{f})+\dfrac{2}{k+1}T(r)+S(r).$$
Combining (\ref{3.6}) and the above inequality, we get
\begin{align*}
n_1n_2\overline N_{(n_1+1,k)}(r,\nu^{a_1}_f)\le&(n_2+1)\overline N_{(n_3+1,k)}(r,\nu^{a_3}_{f})\\
&+\overline N_{(n_1+1,k)}(r,\nu^{a_1}_{f})+\dfrac{2n_2+2}{k+1}T(r)+S(r).
\end{align*}
Thus,
\begin{align*}
(n_1n_2-1)\overline N_{(n_1+1,k)}(r,\nu^{a_1}_f)\le (n_2+1)\left(\overline N_{(n_3+1,k)}(r,\nu^{a_3}_{f})+\dfrac{2}{k+1}T(r)\right)+S(r).
\end{align*}
Similarly, we have
\begin{align*}
(n_3n_2-1)\overline N_{(n_3+1,k)}(r,\nu^{a_3}_f))\le (n_2+1)\left(\overline N_{(n_1+1,k)}(r,\nu^{a_1}_{f})+\dfrac{2}{k+1}T(r)\right)+S(r).
\end{align*}
From above two inequalities, we obtain
\begin{align*}
(n_3n_2-1)(n_1n_2-1)&\overline N_{(n_1+1,k)}(r,\nu^{a_1}_f)\le (n_3n_2-1)\dfrac{2(n_2+1)}{k+1}T(r)\\ 
&+(n_2+1)^2\left(\overline N_{(n_1+1,k)}(r,\nu^{a_1}_{f})+\dfrac{2}{k+1}T(r)\right)+S(r).
\end{align*}
This implies that
$$\Delta\overline N_{(n_1+1,k)}(r,\nu^{a_1}_f)\le \frac{2(n_2+1)(n_3+1)}{k+1}T(r)+S(r).$$
The lemma is proved.
\end{proof}
\end{lemma}

\section{Proof of Theorem 1}
We first prove the foloowing lemma.
\begin{lemma}\label{3.1} 
    \textit{Let $f$ and $g$ be non-constant meromorphic functions, $f$ is not a M\"{o}bius transformation of $g$. Assume that $f$ and $g$ weakly share three values $a_1=0, a_2=1, a_3=\infty$ with the bi-weights $(n_i,k)$ respectively, where $n_1,n_2,n_3$ and $k$ be positive integers or $+\infty$ with $n_i\le k\ (1\le i\le 3)$ satisfying $\Delta:=n_1n_2n_3-n_1-n_2-n_3-2>0$. We have one of the followings holds:
\begin{itemize}
    \item [(a)] $T(r,f)\le T(r,g)+\left(\displaystyle\frac{8\sum_{1\le i<j\le 3}(n_i+1)(n_j+1)}{\Delta(k+1)}+\frac{5}{k+1}\right)T(r) +S(r)$.
    \item [(b)]$
    T(r, f)+T(r, g)\le N(r,g)+N\left(r, \dfrac{1}{g-1}\right) +N\left(r, \dfrac{1}{g}\right)+N_0(r)\\
    +\displaystyle\left(\frac{116\sum_{1\le i<j\le 3}(n_i+1)(n_j+1)}{\Delta(k+1)}+\frac{78}{k+1}\right)T(r)+ S(r).$
\end{itemize}}
\end{lemma}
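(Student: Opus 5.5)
Lemma \ref{3.1} is the bi-weight analogue of the key lemma in \cite{QK}, which in turn follows Li--Yang \cite{LY3}. The plan is to import that argument, replacing every use of ``shared values'' by the bi-weight sharing. The discrepancy between the zero divisors of $f-a_i$ and $g-a_i$ is then bounded by two things. The first is the points of multiplicity $>k$, contributing at most $\frac{1}{k+1}T(r)$ for each $i$ and each of $f,g$. The second is the points of multiplicity in $(n_i+1,k)$, which Lemma \ref{2.6} bounds by $\frac{2(n_v+1)(n_w+1)}{\Delta(k+1)}T(r)$. Write
$$\Theta(r):=\sum_{i}\overline N_{(n_i+1,k)}(r,\nu^{a_i}_f)+\sum_i\sum_{h=f,g}\overline N_{(k+1}(r,\nu^{a_i}_h).$$
By Lemma \ref{2.6} (and its symmetric version for $g$),
$$\Theta(r)\le\Big(\frac{2\sum_{i<j}(n_i+1)(n_j+1)}{\Delta(k+1)}+\frac{6}{k+1}\Big)T(r)+S(r).$$
Outside a set of counting function $\Theta(r)+S(r)$, the functions $f-a_i$ and $g-a_i$ have identical zeros with identical multiplicities. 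The two alternatives (a), (b) will emerge from a case distinction on an auxiliary function, with all error terms multiples of $\Theta$.

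\emph{Step 1 (alternative (a)).} Following Li--Yang, I consider the auxiliary functions
$$\phi=\frac{f'(g-1)}{f(f-1)}\cdot\frac{g(g-1)}{g'}\cdots$$
of the standard type. The simplest choice is $\psi=\frac{f'}{f(f-1)}-\frac{g'}{g(g-1)}$, whose poles are only at unshared or non-equal-multiplicity points of $0,1,\infty$. If $\psi\equiv0$, then integrating shows that $f$ is a Möbius transformation of $g$, which is excluded. Otherwise, $\frac{f'}{f(f-1)}$ and $\frac{g'}{g(g-1)}$ agree up to terms controlled by $N(r,\psi)\le\Theta(r)+S(r)$.

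Next I compare $T(r,f)$ and $T(r,g)$ via the identity
$$\overline N(r,\tfrac1f)+\overline N(r,\tfrac1{f-1})+\overline N(r,f)=\overline N(r,\tfrac1g)+\overline N(r,\tfrac1{g-1})+\overline N(r,g)+O(\Theta).$$
I combine it with Theorem \ref{2.1} for $f$ and the trivial bound $\le 3T(r,g)$, and with the lemma on the logarithmic derivative. In the subcase where, say, $f$ and $g$ share their $1$-points "well", this should give $T(r,f)\le T(r,g)+c\,\Theta+S(r)$. Tracking constants, each of the roughly $4$ uses of $\Theta$ yields the coefficient $8\sum/(\Delta(k+1))+5/(k+1)$ in (a).

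\emph{Step 2 (alternative (b)).} When (a) fails, I apply the Li--Yang construction. I set $f_1=\frac{f}{g}$ and $f_2=\frac{f-1}{g-1}$. Their zeros and poles occur only at the bad points, so
$$\sum_{i}\big(\overline N(r,\nu^0_{f_i})+\overline N(r,\nu^\infty_{f_i})\big)\le 2\Theta(r)+S(r).$$
The common $1$-points of $f_1,f_2$ are exactly the points where $f=g$, outside the set counted by $\Theta$; these points are what $N_0(r)$ counts. I then invoke Lemma \ref{2.5} with a suitable $n$, but only if $f_1^sf_2^t\not\equiv1$ for all $(s,t)\neq(0,0)$. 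If instead some $f_1^sf_2^t\equiv1$, then $f$ satisfies an algebraic relation with $g$. The standard analysis of \cite{LY3} then shows that $f$ is a Möbius transformation of $g$ (excluded), or else gives (a) directly.

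Otherwise, Lemma \ref{2.5} bounds $\overline N(r,1;f_1,f_2)$ by a constant multiple of $\Theta$, plus possibly $\frac{1}{n+2}T(r)$. Now I apply the second main theorem to $g$ with the four values $0,1,\infty$ and the points where $f=g$, in the form
$$T(r,f)+T(r,g)\le N(r,g)+N(r,\tfrac1g)+N(r,\tfrac1{g-1})+N_0(r)+\ldots$$
Here $T(r,f)$ is estimated via $N(r,\tfrac1{f-g})$ and the sharing. This yields (b), with the numerical constants $116$ and $78$ coming from the coefficient in Lemma \ref{2.5} and the multiplicity of $\Theta$-contributions.

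\emph{Main obstacle.} The main obstacle is the bookkeeping in Step 2. One must check that every place in the Li--Yang/Quang--Ky argument which uses exact equality of multiplicities fails only on the set controlled by $\Theta$. Verifying this is routine in principle, but it is where the bi-weight hypothesis $n_i\le k$ and Lemma \ref{2.6} genuinely enter. Tracking the exact constants $116$ and $78$ is the most delicate part; I would reproduce the proof of the corresponding lemma in \cite{QK} line by line, substituting $\Theta$ for $\sum 1/(k_i+1)\,T(r)$.
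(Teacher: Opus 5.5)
There is a genuine gap: your outline never actually derives either alternative. In Step 1, Theorem \ref{2.1} together with the sharing only gives $(1-\epsilon)T(r,f)\le 3T(r,g)+O(\Theta)+S(r)$. Nothing in your sketch lowers the coefficient $3$ to the coefficient $1$ that (a) requires, and the ``subcase where $f$ and $g$ share their $1$-points well'' is never specified.

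In Step 2, the real content of (b) is an inequality of the shape $T(r,f)\le N(r,1/g')+N_0(r)+O(\Theta)+S(r)$. This is what cancels the ramification term $-N(r,1/g')$ in the second main theorem for $g$. ``Estimating $T(r,f)$ via $N(r,1/(f-g))$'' does not give it: $g$ is not small with respect to $f$, so the first main theorem gives no bound of $T(r,f)$ by the zeros of $f-g$.

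Lemma \ref{2.5} also has no place in this lemma. Statement (b) keeps $N_0(r)$ unestimated and has the absolute constants $116$ and $78$, whereas Lemma \ref{2.5} brings an $n$-dependent coefficient and a $\frac{1}{n+2}T(r)$ term. That lemma, like the case $h_1^sh_2^t\equiv 1$, belongs to the proof of Theorem \ref{1.1}, after Lemma \ref{3.1}.

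The missing idea is the step after your $\psi$, which is $-\beta$ with $\beta=h_1'/h_1-h_2'/h_2$. Put $\alpha=-\frac{1}{\beta}\frac{h_2'}{h_2}$. Both $h_2'/h_2$ and $\beta$ are combinations of logarithmic derivatives with poles only at bad points. Hence $T(r,\alpha)\le T(r,h_2'/h_2)+T(r,\beta)=O(\Theta)+S(r)$, and this is exactly the coefficient appearing in (a). A direct computation gives
\[
f-\alpha=\frac{g'(f-g)}{g(g-1)\beta},
\]
so the zeros of $f-\alpha$ are accounted for by $N(r,1/g')$, $N_0(r)$ and $N(r,\beta)$.

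The dichotomy comes from writing $f-\alpha=F_1/(h_2^{-1}-h_1^{-1})$ with $F_1+F_2+F_3=1$, $F_2=(1-\alpha)h_2^{-1}$ and $F_3=\alpha h_1^{-1}$.
\begin{itemize}
\item In the degenerate situations (linear dependence of $F_1,F_2,F_3$, or $\gamma\equiv 0$) one obtains a relation like $\frac{f-1}{f}=c\,\frac{\alpha-1}{\alpha}\,\frac{g-1}{g}$. This gives $T(r,f)\le T(r,g)+T(r,\alpha)+O(1)$, which is (a).
\item Otherwise, a Cartan-type second main theorem for $F_1+F_2+F_3=1$ controls $m(r,1/(f-\alpha))$. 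This gives $T(r,f)\le N(r,1/(f-\alpha))+2\sum_i(\overline N(r,h_i)+\overline N(r,1/h_i))+3N(r,\nu_1)+12T(r,\alpha)+S(r)$. Inserting this into the second main theorem for $g$ gives (b).
\end{itemize}
The constants $116=4+4+12+96$ and $78=3+6+9+60$ come from $N(r,\beta)$ together with these three error terms. Without $\alpha$ and this dependence/independence split, ``reproduce \cite{QK} line by line'' is a pointer rather than a proof.

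A minor point: the multiplicity-$>k$ points contribute $\frac{3}{k+1}T(r)$ to $\Theta$, not $\frac{6}{k+1}T(r)$. This is because $\overline N_{(k+1}(r,\nu^{a_i}_f)+\overline N_{(k+1}(r,\nu^{a_i}_g)\le\frac{1}{k+1}T(r)+S(r)$.
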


\begin{proof}. 
We define $h_1:=\dfrac{f}{g}$ and $h_2:=\dfrac{f-1}{g-1}$. It easy to see that $T(r, h_i)\le T(r)$, $i=1,2$. Because $f$ is not a M\"{o}bius transformation of $g$, $\beta =\dfrac{h_1'}{h_1}-\dfrac{h_2'}{h_2} \not\equiv 0$. Set $\alpha=-\dfrac{1}{\beta}\dfrac{h_2'}{h_2}$. From Lemma \ref{2.6}, we have
\begin{align}\label{3.2}
\begin{split}
    N(r, \beta)&\le N(r, \nu_1)\le \sum_{h=f, g} \sum_{i=1}^{3}\left[\overline{N}_{(n_i+1,k)}(r, \nu_h^{a_i})+\overline{N}_{(k+1}(r, \nu_h^{a_i})\right]\\
    &\le \left(\frac{4\sum_{1\le i<j\le 3}(n_i+1)(n_j+1)}{\Delta(k+1)}+\frac{3}{k+1}\right)T(r)+S(r),\\
\end{split} 
\end{align} 
\begin{align}\label{3.3}\begin{split}
    T(r, \alpha)&\le T\left(r, \dfrac{h_2'}{h_2}\right)+T(r, \beta)=N\left(r, \dfrac{h_2'}{h_2}\right)+N(r, \beta)+S(r)\\
    & \le \sum_{h=f, g} \sum_{i=2,3}\left(\overline{N}_{(n_i+1,k)}(r, \nu_h^{a_i}) +\overline{N}_{(k+1}(r, \nu_h^{a_i})\right)+ N(r,\beta) +S(r)\\
    & \le \left(\frac{8\sum_{1\le i<j\le 3}(n_i+1)(n_j+1)}{\Delta(k+1)}+\frac{5}{k+1}\right)T(r) +S(r).
\end{split}\end{align}
where $\nu_1$ is the reduced divisor counting all points $z$ such that 
$$\nu_f(z)\ne \nu_g(z)\text{ or }\nu_{f-1}(z)\ne \nu_{g-1}(z).$$

It is easy to compute that
\begin{align*}
    f = \dfrac{h_1-h_1h_2}{h_1-h_2}=\dfrac{h_2^{-1}-1}{h_2^{-1}-h_1^{-1}}
\end{align*}
Thus 
\begin{align*}
    f-\alpha=\dfrac{(1-\alpha)h_2^-1+\alpha h_1^{-1}-1}{h_2^{-1}-h_1^{-1}}
\end{align*}
Now, we set $F_1=(\alpha-1)h_2^{-1}-\alpha h_1^{-1}+1, F_2=(1-\alpha)h_2^{-1}, F_3=\alpha h_1^{-1}$. Then
$$F_1+F_2+F_3=1$$
and
\begin{align*}
    f-\alpha=\dfrac{F_1}{h_2^{-1}-h_1^{-1}}
\end{align*}
By using Claim 3.11 in \cite{QK}, we have
\begin{align}\label{3.4}
\begin{split}
 N\left(r,\dfrac{1}{f-\alpha}\right)&-N(r,f-\alpha)\\
&\ge N\left(r,\dfrac{1}{F_1}\right)-N\left(r,\dfrac{1}{h_2^{-1}-h_1^{-1}}\right)-N(r,\alpha)\\
&\ge N\left(r,\dfrac{1}{F_1}\right)-N\left(r,\dfrac{1}{h_2^{-1}-h_1^{-1}}\right)+N(r,f)-2T(r,\alpha).
\end{split}
\end{align}
\noindent We consider the following two cases:

\textbf{Case 1:} $F_1,F_2,F_3$ are linearly dependent. Then there exist constants $c_1,c_2,c_3$, not all zeros, such that
\begin{align*}
    c_1F_1+c_2F_2+c_3F_3=0
\end{align*}
\begin{itemize}
    \item If $c_1 =0,$ then $c_2F_2+c_3F_3=0$. This implies that 
		$$\frac{f-1}{f}=-\frac{c_2}{c_3}\frac{1-\alpha}{\alpha}\frac{g-1}{g}.$$
		Therefore
		$$T(r,f)\le T(r,g)+T(r,\alpha)+O(1).$$
	\item If $c_1\neq 0$ then $c_2'h_2+c_3'h_3\equiv 1$, where $\displaystyle c_2'=(1-\frac{c_2}{c_1}),c_3'=(1-\frac{c_3}{c_1}).$
		It is easy to show that
		$$f(f-1)=c_2'(1-\alpha)(g-1)f+c_3'\alpha(f-1).$$
		Thus
		$$2T(r,f)\le T(r,g)+T(r,f)+T(r,\alpha)+O(1),$$
		\text{ i.e.,}$$\ T(r,f)\le T(r,g)+T(r,\alpha)+O(1).$$
\end{itemize}     
Combining this inequality with the inequality (\ref{3.3}), we obtain the desired inequality (a) of the lemma.

\textbf{Case 2:} $F_1,F_2,F_3$ are linearly independent. By inequality (1.1.18) in \cite[Lemma 1.102]{PPC}, we have
 \begin{align*}
			T(r,F_1)\le N\Big(r,\frac{1}{F_1}\Big)+2\sum_{i=1,2}\left(\overline{N}(r,F_i)+\overline{N}(r,\frac{1}{F_i})\right)+6T(r,\alpha)+S(r).
\end{align*}
Combining this with inequality (\ref{3.3}), thus
\begin{align*}
			N\Big(r,\dfrac{1}{f-\alpha}\Big)&\ge T(r,(h_1)-N\Big(r,\dfrac{1}{h_2^{-1}-h_1^{-1}}\Big)+N(r,f)\\
			&-2\sum_{i=1,2}\left(\overline{N}(r,h_i)+\overline{N}(r,\frac{1}{h_i})\right)-8T(r,\alpha)+S(r).
\end{align*}	

We define $F:=\dfrac{\alpha-1}{h_2}-\dfrac{\alpha}{h_1}$ and $G:=\dfrac{1}{h_2}-\dfrac{1}{h_1}$. Then we have 
\begin{align}\label{3.13}
F'=\Big(\alpha'-(\alpha-1)\dfrac{F_2'}{F_2}\Big)\dfrac{1}{F_2}-\Big(\alpha'-\alpha\dfrac{F_1'}{F_1}\Big)\dfrac{1}{F_1}.
\end{align}
Let $\gamma=-\alpha'+\alpha(\alpha-1)\Big(\dfrac{h_2'}{h_2}-\dfrac{h_1'}{h_1}\Big).$

$\bullet$  If $\gamma\equiv 0$ then there is a constant $c$ such that
		$\frac{h_2}{h_1}=\frac{c(\alpha-1)}{\alpha}$, i.e, $\frac{f-1}{f}=\frac{c(\alpha-1)}{\alpha}\frac{g-1}{g}$. It yields that
		$$T(r,f)\le T(r,g)+T(r,\alpha).$$
		Combining this inequality with inequality (\ref{3.3}), we again obtain the desired inequality (a) of the lemma.

$\bullet$ If $\gamma \not\equiv 0$, then it follows from (\ref{3.13}) and the definition of $F$ that
$$\frac{1}{F_2}=\frac{1}{\gamma}\left(\Big(\alpha'-\alpha\dfrac{F_1'}{F_1}\Big)F-\alpha F'\right)\text{ and } \frac{1}{F_1}=\frac{1}{\gamma}\left(\Big(\alpha'-(\alpha-1)\dfrac{F_2'}{F_2}\Big)F-(\alpha-1)F'\right).$$
This implies that
$$G=\frac{1}{\gamma}\left((\alpha-1)(\frac{F_2'}{F_2}-\alpha\frac{F_1'}{F_1})F+F'\right).$$
We follows the argument in \cite{QK} to give the following estimate. First, we have
\begin{align*}
T(r,G)&\le T(r,\gamma)+T\left(r,(\alpha-1)(\frac{F_2'}{F_2}-\alpha\frac{F_1'}{F_1})F+F')\right)\\
&\le N(r,\gamma)+m(r,F)+N\left(r,(\alpha-1)(\frac{F_2'}{F_2}-\alpha\frac{F_1'}{F_1})F+F')\right)+S(r)\\
&\le N(r,\gamma)+m(r,F)+N(r,F)+\overline N(r,F)+2N(r,\alpha)+N(r,\nu_1)+S(r)\\
&\le T(r,F)+2N(r,\nu_1)+4N(r,\alpha)+S(r),
\end{align*}
Hence, we obtain
\begin{align*}
N\Big(r,\dfrac{1}{f-\alpha}\Big) &\ge T(r,F_2^{-1}-F_1^{-1})-N\Big(r,\dfrac{1}{F_2^{-1}-F_1^{-1}}\Big)+N(r,f)-2N(r,\nu_1)\\
&-2\sum_{i=1,2}\left (\overline{N}(r,F_i)+\overline N(r,\frac{1}{F_i})\right)-4N(r,\alpha)-8T(r,\alpha)+S(r)\\
&=m\Big(r,\dfrac{1}{F_2^{-1}-F_1^{-1}}\Big)+N(r,f)-2\sum_{i=1,2}\left (\overline{N}(r,F_i)+\overline N(r,\frac{1}{F_i})\right)\\
&-2N(r,\nu_1)-12T(r,\alpha)+S(r).
\end{align*}
From $G=F_2^{-1}-F_1^{-1}$, we have $G'=\lambda_1 F_1^{-1}-\lambda_2 F_2^{-1}$ where $\lambda_1=\dfrac{F_1'}{F_1},\lambda_2=\dfrac{F_2'}{F_2}$. Obviously, $\lambda_1-\lambda_2 \neq 0$. Therefore,
$F_2^{-1}=\frac{\lambda_1 G}{\lambda_1-\lambda_2}+\frac{G'}{\lambda_1-\lambda_2},$
and hence
\begin{align*}
f&=\dfrac{F_2^{-1}-1}{F_2^{-1}-F_1^{-1}}=-\dfrac{1}{F_2^{-1}-F_1^{-1}}+\dfrac{F_2^{-1}}{F_2^{-1}-F_1^{-1}}\\
&=-\dfrac{1}{F_2^{-1}-F_1^{-1}}+\dfrac{\lambda_1}{\lambda_1-\lambda_2}+\dfrac{G'}{(\lambda_1-\lambda_2)G}.
\end{align*}
This yields that
\begin{align*}
m(r,f)&\le m\Big(r,\dfrac{1}{F_2^{-1}-F_1^{-1}}\Big)+m(r,\frac{1}{\lambda_1-\lambda_2})+S(r)\\
&\le m\Big(r,\dfrac{1}{F_2^{-1}-F_1^{-1}}\Big)+T(r,\lambda_1-\lambda_2)+S(r)\\
&\le m\Big(r,\dfrac{1}{F_2^{-1}-F_1^{-1}}\Big)+N(r,\nu_1)+S(r).
\end{align*}
It implies that
\begin{align*}
N\Big(r,\dfrac{1}{f-\alpha}\Big)&\ge T(r,f)-2\sum_{i=1,2}\left (\overline{N}(r,F_i)+\overline N(r,\frac{1}{F_i})\right)\\
&-3N(r,\nu_1)-12T(r,\alpha)+S(r).+S(r).
\end{align*}
Hence 
\begin{align*}
N\Big(r,\dfrac{1}{g'}\Big)+N_0(r)+N(r,\beta)&\ge T(r,f)-2\sum_{i=1,2}\left (\overline{N}(r,F_i)+\overline N(r,\frac{1}{F_i})\right)\\
&-3N(r,\nu_1)-12T(r,\alpha)+S(r).
\end{align*}
From the the second main theorem, we have
\begin{align*}
    T(r,f)+T(r,g)&\le T(r,f)+N(r,g)+N\left(r, \dfrac{1}{g-1}\right) +N\left(r, \dfrac{1}{g}\right)-N\left(r, \dfrac{1}{g'}\right)+S(r)\\
    &\le N(r,g)+N\left(r, \dfrac{1}{g-1}\right) +N\left(r, \dfrac{1}{g}\right)+N_0(r)+N(r, \beta)\\
    &+2\sum_{i=1,2}\left(\overline{N}(r, h_i)+\overline{N}\left(r, \dfrac{1}{h_i}\right)\right)+3N(r, \nu_1)+12T(r, \alpha)+S(r)
\end{align*}
From Lemma \ref{2.6}, we have
\begin{align*}
    \sum_{i=1,2}&\left(\overline{N}(r, h_i)+\overline{N}\left(r, \dfrac{1}{h_i}\right)\right)\le \sum_{i=1}^3\overline N(r,|\nu^{a_i}_f-\nu^{b_i}_g|)\\
    &\le \sum_{i=1}^3\overline N_{(n_i+1,k)}(r,\nu^{a_i}_f)+\sum_{i=1}^3\left(\overline N_{(k+1}(r,\nu^{a_i}_f)+\overline N_{(k+1}(r,\nu^{b_i}_g) \right)+S(r)\\
    &\le \left(\frac{2\sum_{1\le i<j\le 3}(n_i+1)(n_j+1)}{\Delta(k+1)}+\frac{3}{k+1}\right)T(r)+S(r).
\end{align*} 
Combining the above inequalities, we get
\begin{align*}
    T(r, f)+T(r, g)&\le N(r,g)+N\left(r, \dfrac{1}{g-1}\right) +N\left(r, \dfrac{1}{g}\right)+N_0(r)\\
    &\le N(r,g)+N\left(r, \dfrac{1}{g-1}\right) +N\left(r, \dfrac{1}{g}\right)+N_0(r)\\
    &+\left(\frac{116\sum_{1\le i<j\le 3}(n_i+1)(n_j+1)}{\Delta(k+1)}+\frac{78}{k+1}\right)T(r)+ S(r). \\
\end{align*}
We obtain the assertion (b).
\end{proof}

\begin{proof}[Proof of Theorem \ref{1.1}] If $f$ is a quasi-M\"{o}bius transformation of $g$ then 
$$T(r, f)=T(r, g)+S(r),$$ 
and hence we get the conclusion of theorem.Therefore, we may assume that $f$ is not a quasi-M\"{o}bius transformation of $g$. Moreover,without loss of generality, we may assume that $a_1=0, a_2=1, a_3=\infty$. 

With $h_1, h_2$ are defined in Lemma \ref{3.1}, then 
$$T(r, h_i)\le T(r)(i=1,2).$$ 
If there exist integers $s, t$ ($\|s\|+\|t\|>0$) such that $h_1^sh_2^t-1\equiv 0)$ then $f^s(f-1)^t=g^s(g-1)^t$, thus $T(r,f)=T(r,g)$ and we get the conclusion of the theorem.

Therefore, we may suppose that $h_1^sh_2^t-1\not\equiv 0$ for all integers $s, t$ ($\|s\|+\|t\|>0$), then by Lemma \ref{3.1}, one of the fowlloing inequalities holds:
\begin{align}\label{3.5}
    T(r,f)\le T(r,g)+\left(\dfrac{8\sum_{1\le i<j\le 3}(n_i+1)(n_j+1)}{\Delta(k+1)}+\frac{5}{k+1}\right)T(r) +S(r),
\end{align}
\begin{align}\label{3.6}
\begin{split}
    T(r, f)+T(r, g)\le N(r,g)+N\left(r, \dfrac{1}{g-1}\right) +N\left(r, \dfrac{1}{g}\right)+N_0(r)\\
    +\left(\frac{116\sum_{1\le i<j\le 3}(n_i+1)(n_j+1)}{\Delta(k+1)}+\frac{78}{k+1}\right)T(r)+ S(r).
\end{split} 
\end{align}

$\bullet$ If the inequality (\ref{3.5}) holds then we completely get the conclusion of the theorem.

$\bullet$ We suppose that the inequality (\ref{3.6}) holds. From the Lemma \ref{2.5}, for any positive integer $n$, we get
\begin{align*}
    \overline N(r,1;h_1,h_2)&\leq \dfrac{1}{n+2}\left(T(r,h_1)+T(r, h_2)\right) +\lambda\sum_{i=1,2}\left(\overline N(r,\nu^0_{h_i})+\overline N(r,\nu^\infty_{h_i})\right)\\
    & \le \left (\dfrac{2}{n+2}+\lambda\left(\frac{2\sum_{1\le i<j\le 3}(n_i+1)(n_j+1)}{\Delta(k+1)}+\frac{3}{k+1}\right) \right)T(r),
\end{align*}
where $h_1, h_2$ are defined in Lemma \ref{3.1} and 
$$\lambda=2^{(n+1)^2}+n^4+4n^3+n^2-6n-2.$$
From the above inequality and  (\ref{3.6}) we have
\begin{align*}
&T(r,f)+T(r,g)\\&
\le 3T(r,g)+\left (\dfrac{2}{n+2}+\left(\frac{(2\lambda+116)\sum_{1\le i<j\le 3}(n_i+1)(n_j+1)}{\Delta(k+1)}+\frac{3\lambda+78}{k+1}\right) \right)T(r).
\end{align*}
We choose $n$ be the smallest integer  such that $\dfrac{2}{n+2}<\epsilon$, i.e., $n+1\leq \dfrac{2}{\epsilon}$. Hence
$$T(r,f)\le 2T(r,g)+\left (\epsilon+\delta_\epsilon\right) T(r).$$
Thus
$$(1-\epsilon-\delta_\epsilon)T(r, f)\le (2+\epsilon +\delta_\epsilon )T(r, g)+S(r, g).$$
By the symmetry of $f$ and $g$, we also have
$$(1-\epsilon-\delta_\epsilon)T(r, g)\le (2+\epsilon +\delta_\epsilon )T(r, f)+S(r, f).$$
Hence, $S(r,f)=S(r,g)=S(r)$. Therefore, we get the desired inequality. 

The theorem is proved
\end{proof}

\section*{Data availability}
Data sharing not applicable to this article as no datasets were generated or analyzed during the current study.

\section*{Disclosure statement}
No potential conflict of interest was reported by the author(s).

\end{document}